\theoremstyle{plain}
	\newtheorem{thm}{Theorem}[section]
	\newtheorem{cor}[thm]{Corollary}
	\newtheorem{lem}[thm]{Lemma}
\theoremstyle{definition}
\theoremstyle{remark}
\def\C{\mathbb{C}}
\def\B{\mathcal{B}}
\def\M{\mathcal{M}}
\def\a{\alpha}
\def\d{\delta}
\def\Lam{\Lambda}
\def\lam{\lambda}
\def\imp{\Rightarrow}
\def\s{\setminus}
\def\ds{\displaystyle}
\let\Ran\relax
	\DeclareMathOperator{\Ran}{Ran}
\let\supp\relax
	\DeclareMathOperator{\supp}{supp}
\newcommand{\paren}[1]{\left(#1\right)}
\begin{document}

\title[Range preserving maps]{Range preserving maps between the spaces of continuous functions with values in a locally convex space}
\author[Y. Enami]{Yuta Enami}
\address{Department of Mathematics, Faculty of Science, Niigata University, Niigata 950-2181 Japan}
\email{enami@m.sc.niigata-u.ac.jp}
\subjclass[2010]{46E10}
\keywords{Range preserving map; vector-valued function space}

\maketitle

\begin{abstract}
Let $C(X,E)$ be the linear space of all continuous functions on a compact \mbox{Hausdorff} space $X$ with values in a locally convex space $E$. We characterize maps $T:C(X,E)\to C(Y,E)$ which satisfy $\Ran(TF-TG)\subset\Ran(F-G)$ for all $F,G\in C(X,E)$.
\end{abstract}

\section{Introduction}

Let $X$ and $Y$ be compact \mbox{Hausdorff} spaces, and let $C(X)$ and $C(Y)$ be the \mbox{Banach} algebras of all continuous complex-valued functions on $X$ and $Y$, respectively. The range of $f\in C(X)$ will be denoted by $\Ran(f)$. For each $f\in C(X)$, the range $\Ran(f)$ coincides with the spectrum $\sigma(f)$ of $f$. 

\mbox{Gleason} \cite{Gle} and \mbox{Kahane} and \mbox{\.{Z}elazko} \cite{KZ} have proved independently the following theorem known as the \mbox{Gleason}-\mbox{Kahane}-\mbox{\.{Z}elazko} theorem: Every linear functional $T$ on a complex \mbox{Banach} algebra $A$ which satisfies $Ta\in \sigma(a)$ for all $a\in A$ is multiplicative, where $\sigma(a)$ is the spectrum of $a$. It follows that every linear map $T:C(X)\to C(Y)$ which satisfies $\Ran(Tf)\subset \Ran(f)$ for all $f\in C(X)$ is multiplicative. \mbox{Kowalski} and \mbox{S{\l}odkowski} \cite{KS} have proved a surprising generalization of the \mbox{Gleason}-\mbox{Kahane}-\mbox{\.{Z}elazko} theorem as follows: Every map $T:A\to\C$, not necessarily linear, which satisfies $T0=0$ and $Ta-Tb\in\sigma(a-b)$ for all $a,b\in A$ is linear and multiplicative. Applying the \mbox{Kowalski}-\mbox{S{\l}odkowski} theorem, \mbox{Hatori}, \mbox{Miura} and \mbox{Takagi} \cite{HMT} have proved that every map $T$ from a unital \mbox{Banach} algebra $A$ into a unital semi-simple commutative \mbox{Banach} algebra $B$ which satisfies $\sigma(Ta+Tb)\subset\sigma(a+b)$ for all $a,b\in A$ is linear and multiplicative.

\mbox{Ghodrat} and \mbox{Sady} \cite{GS} introduced a spectrum for \mbox{Banach} module elements. Let $C(X,E)$ be a Banach $C(X)$-module of all continuous functions on $X$ with values in a \mbox{Banach} space $E$. For each $F\in C(X,E)$, the spectrum of $F$ as a \mbox{Banach} module element is a subset of $\{\Lam(F(x)):x\in X,\Lam\in (E^\ast)_1\s\{0\}\}$, where $(E^*)_1$ is the closed unit ball of the dual space of $E$. Moreover, under some assumptions, they have characterized spectrum preserving bounded linear maps between unital \mbox{Banach} modules over a unital \mbox{Banach} algebra.  

In this paper, We will consider maps $T:C(X,E)\to C(Y,E)$ which satisfy $\Ran(TF-TG)\subset \Ran(F-G)$ for all $F,G\in C(X,E)$, where $\Ran(F)=\{F(x):x\in X\}$. The following are main results of this paper.

\begin{thm}\label{MT}
Let $X$ and $Y$ be compact \mbox{Hausdorff} spaces, and let $E$ be a locally convex space which is not $\{0_E\}$. If $T:C(X,E)\to C(Y,E)$ is a map, not necessarily linear, which satisfies
$$
\Ran(TF-TG)\subset \Ran(F-G)
$$
for every $F,G\in C(X,E)$, then there exists a continuous map $\varphi:Y\to X$ such that 
$$
TF=T(1\otimes 0_E)+F\circ\varphi
$$
for every $F\in C(X,E)$. In particular, if, in addition, $T(1\otimes 0_E)=1\otimes 0_E$, then $T$ is linear.
\end{thm}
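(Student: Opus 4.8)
The plan is to normalize, then for each point $y\in Y$ extract a single point $\varphi(y)\in X$ by testing $T$ against one-dimensional slices. Since $F\mapsto TF-T(1\otimes 0_E)$ satisfies the same hypothesis and sends $1\otimes 0_E$ to $1\otimes 0_E$, and since $TF=T(1\otimes 0_E)+F\circ\varphi$ would follow from $TF-T(1\otimes 0_E)=F\circ\varphi$, I would assume from the outset that $T(1\otimes 0_E)=1\otimes 0_E$ and aim to prove $TF=F\circ\varphi$ for a continuous $\varphi\colon Y\to X$; the final assertion of the theorem is then immediate. Taking $G=1\otimes 0_E$ gives $\Ran(TF)\subset\Ran(F)$ for all $F$; in particular $T(1\otimes e)$ has range $\{e\}$, so $T$ fixes every constant function.

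Next, fix $y\in Y$ and a nonzero $e\in E$. I would note that the range condition forces $(T(f\otimes e))(y)\in\Ran(f)\cdot e\subset\C e$, so there is a unique scalar $\psi_e(f)$ with $(T(f\otimes e))(y)=\psi_e(f)\,e$; explicitly $\psi_e(f)=\Lambda\big((T(f\otimes e))(y)\big)$ for any $\Lambda\in E^*$ with $\Lambda(e)=1$. From $\Ran\big((f-g)\otimes e\big)=\Ran(f-g)\cdot e$ and $e\neq 0_E$ one checks $\psi_e(f)-\psi_e(g)\in\Ran(f-g)=\sigma(f-g)$ for all $f,g\in C(X)$, and $\psi_e(0)=0$; so the Kowalski--S{\l}odkowski theorem applies to $\psi_e\colon C(X)\to\C$, making it linear and multiplicative, hence (it is nonzero since $\psi_e(1)=1$) evaluation at some point $x_e(y)\in X$. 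Thus $(T(f\otimes e))(y)=f(x_e(y))\,e$.

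The key step --- and the one I expect to be the main obstacle --- is showing that $x_e(y)$ does not depend on $e$. For $e_2=\lambda e_1$ with $\lambda\neq 0$ this is immediate from $f\otimes e_2=(\lambda f)\otimes e_1$. For linearly independent $e_1,e_2$, I would pick $\Lambda_1,\Lambda_2\in E^*$ with $\Lambda_i(e_j)=\delta_{ij}$ (Hahn--Banach); applying the hypothesis to $f\otimes e_1$ and $g\otimes e_2$ produces an $x\in X$ with $f(x_{e_1}(y))e_1-g(x_{e_2}(y))e_2=f(x)e_1-g(x)e_2$, and applying $\Lambda_1,\Lambda_2$ yields $f(x)=f(x_{e_1}(y))$ and $g(x)=g(x_{e_2}(y))$ for all $f,g\in C(X)$. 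Were $x_{e_1}(y)\neq x_{e_2}(y)$, Urysohn's lemma would give $f$ with $f(x_{e_1}(y))=0$ and $f(x_{e_2}(y))=1$, and taking $g=f$ would force $f(x)$ to equal both $0$ and $1$. Hence $x_{e_1}(y)=x_{e_2}(y)$, and since $E\neq\{0_E\}$ I may set $\varphi(y):=x_e(y)$, independent of the nonzero $e$, obtaining $(T(f\otimes e))(y)=f(\varphi(y))\,e$ for all $f\in C(X)$, $e\in E$, and $y\in Y$.

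Finally I would pass from slices to an arbitrary $F\in C(X,E)$: fixing $y$ and a nonzero $\Lambda\in E^*$, choosing $e$ with $\Lambda(e)=1$ and putting $f=\Lambda\circ F\in C(X)$, the hypothesis applied to $F$ and $f\otimes e$ gives $x\in X$ with $(TF)(y)-f(\varphi(y))e=F(x)-f(x)e$; applying $\Lambda$ cancels the $e$-terms and leaves $\Lambda((TF)(y))=\Lambda(F(\varphi(y)))$. Since $E^*$ separates the points of $E$, $(TF)(y)=F(\varphi(y))$, that is, $TF=F\circ\varphi$. Continuity of $\varphi$ then comes for free: $T(f\otimes e)=(f\circ\varphi)\otimes e\in C(Y,E)$ shows $y\mapsto f(\varphi(y))$ is continuous for every $f\in C(X)$, and $C(X)$ separates the points of the compact \mbox{Hausdorff} space $X$. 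Beyond the $e$-independence step, the arguments are routine uses of Hahn--Banach, Urysohn's lemma, and the cited Kowalski--S{\l}odkowski theorem.
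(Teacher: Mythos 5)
Your proposal is correct, and while its first half mirrors the paper's argument, its second half takes a genuinely different and shorter route. Like the paper, you normalize $T(1\otimes 0_E)=1\otimes 0_E$, restrict to the slices $f\otimes e$, and apply the Kowalski--S{\l}odkowski theorem to the scalar functional $f\mapsto\psi_e(f)$ at each $y\in Y$ to produce a point $x_e(y)$ with $T(f\otimes e)(y)=f(x_e(y))e$; your independence-of-$e$ step (comparing $f\otimes e_1$ with $g\otimes e_2$, matching coefficients by linear independence, then taking $g=f$ with Urysohn) is a compressed, direct version of the paper's two lemmas on additivity of $T$ on $C(X)\otimes E$ and on $\tilde{T}_u=\tilde{T}_v$, and it is sound -- in fact the functionals $\Lambda_1,\Lambda_2$ are not even needed there, since an equality of vectors in the span of $e_1,e_2$ already forces equality of coefficients. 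The real divergence is the passage from slices to a general $F$: the paper proves additivity on the tensor product, density of $C(X)\otimes E$ in $C(X,E)$ via a partition of unity, and continuity of $T$ from the range condition, and then extends by density; you instead test $F$ against $(\Lambda\circ F)\otimes e$ with $\Lambda\in E^\ast$, $\Lambda(e)=1$, note that applying $\Lambda$ makes the unknown point $x$ cancel, and conclude $(TF)(y)=F(\varphi(y))$ because $E^\ast$ separates points of the Hausdorff locally convex space $E$. This trades the paper's density-and-continuity machinery for Hahn--Banach: both exploit local convexity in an essential way (the paper through convexity in the partition-of-unity density lemma, you through the existence of enough continuous functionals), but your argument is pointwise and avoids introducing the topology of uniform convergence on $C(X,E)$ altogether. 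Your continuity argument for $\varphi$ is also fine: $f\circ\varphi$ is continuous for every $f\in C(X)$ (apply a functional with $\Lambda(e)=1$ to $T(f\otimes e)$), and since the topology of the compact Hausdorff space $X$ coincides with the initial topology induced by $C(X)$, continuity of $\varphi$ follows, exactly as in the paper's net argument.
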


\begin{cor}\label{MTC} Let $T$ and $\varphi$ be as in Theorem \ref{MT}. Statements {\rm (a)} through {\rm (c)} are equivalent.

\noindent{\rm (a)} $T$ is injective.

\noindent{\rm (b)} $\varphi$ is surjective.

\noindent{\rm (c)} $\Ran(TF-TG)=\Ran(F-G)$ for every $F,G\in C(X,E)$.

\noindent{}Also, statements {\rm (d)} and {\rm (e)} are equivalent.

\noindent{\rm (d)} $T$ is surjective.

\noindent{\rm (e)} $\varphi$ is injective.
\end{cor}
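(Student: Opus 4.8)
Write $T_0=T(1\otimes 0_E)$. By Theorem~\ref{MT} we have $TF=T_0+F\circ\varphi$ for every $F\in C(X,E)$, hence $TF-TG=(F-G)\circ\varphi$ and therefore
\[
\Ran(TF-TG)=\{(F-G)(x):x\in\varphi(Y)\}
\]
for all $F,G\in C(X,E)$. The plan is to read off each equivalence from this identity, using elementary Urysohn-type separation in $C(X,E)$ and $C(Y,E)$ for the straightforward implications and a Tietze-type extension theorem for the one genuinely substantive implication.

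Consider first {\rm (a)}$\Leftrightarrow${\rm (b)}$\Leftrightarrow${\rm (c)}. If $\varphi$ is surjective then $\varphi(Y)=X$, so $\Ran(TF-TG)=\Ran(F-G)$; this gives {\rm (b)}$\Rightarrow${\rm (c)}. If {\rm (c)} holds and $TF=TG$, then $\Ran(F-G)=\{0_E\}$, so $F=G$; this gives {\rm (c)}$\Rightarrow${\rm (a)}. For {\rm (a)}$\Rightarrow${\rm (b)} I argue by contraposition. If $\varphi$ is not surjective, fix $x_0\in X\setminus\varphi(Y)$; since $\varphi(Y)$ is a closed subset of the normal space $X$, Urysohn's lemma provides $h\in C(X)$ with $h(x_0)=1$ and $h\equiv 0$ on $\varphi(Y)$. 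Choose $v\in E\setminus\{0_E\}$ and put $H=h\otimes v\in C(X,E)$. Then $H(x_0)=v\ne 0_E$, so $H\ne 1\otimes 0_E$, while $H\circ\varphi=1\otimes 0_E$; hence $TH=T_0=T(1\otimes 0_E)$, and $T$ is not injective.

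Now consider {\rm (d)}$\Leftrightarrow${\rm (e)}. For {\rm (d)}$\Rightarrow${\rm (e)}, argue by contraposition: if $\varphi(y_1)=\varphi(y_2)$ for some $y_1\ne y_2$ in $Y$, then $(TF)(y_1)-(TF)(y_2)=T_0(y_1)-T_0(y_2)$ for every $F\in C(X,E)$, so this fixed vector is the value of $G(y_1)-G(y_2)$ for every $G$ in the range of $T$; since $E\ne\{0_E\}$, a Urysohn argument in $C(Y,E)$ produces a $G$ with $G(y_1)-G(y_2)$ different from it, so $T$ is not surjective. Conversely, assume $\varphi$ injective. Being a continuous injection of the compact space $Y$ into the Hausdorff space $X$, $\varphi$ is a homeomorphism of $Y$ onto the closed subset $\varphi(Y)$ of $X$. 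Given $G\in C(Y,E)$, the map $(G-T_0)\circ\varphi^{-1}$ belongs to $C(\varphi(Y),E)$; if it admits a continuous extension $F\in C(X,E)$, then $F\circ\varphi=G-T_0$, i.e. $TF=G$. Thus $T$ is surjective once we know every continuous $E$-valued function on $\varphi(Y)$ extends continuously to $X$.

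That extension statement is the step I expect to be the main obstacle, and the place where local convexity of $E$ (together with compactness of $\varphi(Y)$) must be used: I would obtain it from a Tietze/Dugundji-type extension theorem for continuous maps into a locally convex space. Everything else---continuity of $h\otimes v$, closedness of $\varphi(Y)$, the identifications of $1\otimes 0_E$ with the zero function in $C(X,E)$ or $C(Y,E)$, and the elementary separation constructions---is routine.
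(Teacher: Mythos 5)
Your treatment of the block (a)--(c) and of (d)$\Rightarrow$(e) is correct and is essentially the paper's own argument: Urysohn's lemma applied to a point of $X\setminus\varphi(Y)$ for (a)$\Rightarrow$(b), the direct computation $(F-G)(\varphi(y))=(TF-TG)(y)$ for (b)$\Rightarrow$(c), the trivial step (c)$\Rightarrow$(a), and a Urysohn function separating $y_1$ from $y_2$ for (d)$\Rightarrow$(e); your explicit bookkeeping of the constant $T_0=T(1\otimes 0_E)$ is fine and if anything slightly more careful than the paper, which works with the normalization $T(1\otimes 0_E)=1\otimes 0_E$.

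The genuine gap is in (e)$\Rightarrow$(d). You reduce it to the statement that \emph{every} continuous $E$-valued function on the closed set $\varphi(Y)$ extends continuously to $X$, and you propose to quote a ``Tietze/Dugundji-type'' theorem for maps into a locally convex space. No such theorem is available at this level of generality: Dugundji's extension theorem requires the domain to be \emph{metrizable}, whereas $X$ here is an arbitrary compact Hausdorff space, and $E$ is an arbitrary locally convex space with no completeness hypothesis (the known extensions of Dugundji's theorem beyond metrizable domains use completeness of the target, e.g.\ Banach or Fr\'echet valued maps, via a successive-approximation argument). So the step you yourself flag as the main obstacle is exactly where the proof is missing, and it is in fact a strictly stronger claim than what is needed. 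The paper avoids vector-valued extension entirely: it applies the \emph{scalar} Tietze theorem to $h\circ\psi$ (with $\psi=\varphi^{-1}$ on $\varphi(Y)$) to show that $\tilde Tf=f\circ\varphi$ maps $C(X)$ onto $C(Y)$; hence the range of $T$ contains every $h\otimes u$, and therefore all of $C(Y)\otimes E$, and the paper concludes surjectivity of $T$ from this together with the fact that $T$ is the composition operator of Theorem \ref{MT} and $C(Y)\otimes E$ is dense (Lemma \ref{LemDS}). If you prefer your route, you must actually prove the extension statement in the situation at hand (for instance by the partition-of-unity approximation of Lemma \ref{LemDS} combined with a series argument when $E$ is complete, or by Dugundji when $X$ is metrizable); as written, citing a general locally convex Tietze theorem for arbitrary compact Hausdorff domains is not a legitimate step.
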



\section{Preliminaries}

We denote by $\sigma(a)$ the spectrum of an element $a$ in a \mbox{Banach} algebra $A$. The following theorem was proved by \mbox{Kowalski} and \mbox{S{\l}odkowski}.

\begin{lem}[{\cite[Theorem 1.2]{KS}}]\label{KS}
Let $A$ be a \mbox{Banach} algebra. Let $\d:A\to\C$ satisfy

\noindent{\rm (i)} $\d(0)=0$, and

\noindent{\rm (ii)} $\d(a)-\d(b)\in\sigma(a-b)$ for every $a,b\in A$.

\noindent{}Then $\d$ is linear and multiplicative.
\end{lem}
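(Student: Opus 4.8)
The plan is to reduce the problem, in stages, to a single complex-analytic fact about one-parameter families $\lambda\mapsto a+\lambda b$, and then to invoke the classical Gleason--Kahane--\.{Z}elazko theorem for the final multiplicative step. First I would record the elementary consequences of (i) and (ii). Taking $b=0$ in (ii) gives $\delta(a)\in\sigma(a)$ for every $a$, whence $|\delta(a)|\le r(a)\le\|a\|$, where $r$ denotes the spectral radius. Condition (ii) then yields $|\delta(a)-\delta(b)|\le r(a-b)\le\|a-b\|$, so $\delta$ is Lipschitz, in particular continuous. Passing to the unitization $A\oplus\C 1$ and extending $\delta$ by $\delta(a+\lambda 1):=\delta(a)+\lambda$, one checks that (i) and (ii) persist, so I may assume $A$ is unital; moreover, since $\sigma(\lambda 1)=\{\lambda\}$, condition (ii) forces the translation identity $\delta(a+\lambda 1)=\delta(a)+\lambda$ for all $a\in A$ and $\lambda\in\C$ (in particular $\delta(1)=1$).

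The heart of the matter is $\C$-linearity of $\delta$; once this is known, $\delta$ is a linear functional with $\delta(a)\in\sigma(a)$, and the Gleason--Kahane--\.{Z}elazko theorem (valid in arbitrary unital Banach algebras) gives multiplicativity. To prove linearity I would fix $a,b\in A$ and study the scalar function $g(\lambda)=\delta(a+\lambda b)$, $\lambda\in\C$. From the above, $g$ is Lipschitz with $|g(\lambda)-g(\mu)|\le|\lambda-\mu|\,r(b)$, it satisfies $g(\lambda)\in\sigma(a+\lambda b)$, and it obeys the linear growth bound $|g(\lambda)|\le\|a\|+|\lambda|\,\|b\|$. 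The key claim is that $g$ is entire. Granting this, $g$ is an entire function of at most linear growth, hence affine: $g(\lambda)=\delta(a)+c\lambda$ for some $c\in\C$. Specializing to $a=0$ gives $\delta(\lambda b)=c\lambda$, so that $\delta(\lambda b)=\lambda\delta(b)$ (homogeneity), while the bound $|g(\lambda)-\delta(\lambda b)|\le r(a)$ (note that $(a+\lambda b)-\lambda b=a$) forces $c=\delta(b)$; evaluating at $\lambda=1$ then yields $\delta(a+b)=\delta(a)+\delta(b)$ (additivity). Thus $\delta$ is $\C$-linear.

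The main obstacle is precisely the entireness of $g$: I must upgrade the Lipschitz selection $\lambda\mapsto g(\lambda)\in\sigma(a+\lambda b)$ to a holomorphic function. The finite-dimensional picture is instructive, since there the eigenvalues are the roots of a characteristic polynomial with coefficients polynomial in $\lambda$, hence algebraic functions of $\lambda$; a Lipschitz single-valued branch cannot pass through a genuine branch point, where a selection would grow like $|\lambda-\lambda_0|^{1/q}$, so it must be locally holomorphic. For a general Banach algebra the same conclusion follows from \mbox{S{\l}odkowski}'s theory of analytic multifunctions: the assignment $\lambda\mapsto\sigma(a+\lambda b)$ is an analytic multifunction (the relevant boundary quantities, such as the spectral radius and $\log\mathrm{dist}(\cdot,\sigma(a+\lambda b))$, are subharmonic in $\lambda$), and a continuous selection that is moreover Lipschitz is forced to satisfy $\bar\partial g=0$, i.e.\ to be holomorphic. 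Establishing this holomorphy is the technical core; the remaining steps---Liouville's theorem, the extraction of additivity and homogeneity, and the final appeal to Gleason--Kahane--\.{Z}elazko---are comparatively routine.
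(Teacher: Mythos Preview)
The paper does not prove this lemma; it is quoted from Kowalski--S{\l}odkowski \cite{KS} and used as a black box in the proof of Lemma~\ref{TT}, so there is no argument in the paper to compare yours against.

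Your outline does track the original Kowalski--S{\l}odkowski strategy: reduce everything to the entireness of the one-parameter selection $g(\lambda)=\delta(a+\lambda b)$, use a Liouville-type argument to conclude that $g$ is affine, read off homogeneity and additivity, and close with Gleason--Kahane--\.{Z}elazko. The auxiliary steps you record---Lipschitz continuity of $\delta$, the translation identity $\delta(a+\lambda 1)=\delta(a)+\lambda$, the affine form $g(\lambda)=\delta(a)+c\lambda$, and the identification $c=\delta(b)$ from the uniform bound $|g(\lambda)-\delta(\lambda b)|\le r(a)$---are all correct. You are also right that the holomorphy of $g$ is where essentially all the content lives; your appeal to analytic multifunctions and subharmonicity points in the correct direction, but as written it is an invocation rather than an argument, so the proposal remains a sketch precisely at the step that constitutes the theorem. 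If you intend this as a self-contained proof rather than a summary of \cite{KS}, that step needs to be filled in.
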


Let $A$ be a unital commutative \mbox{Banach} algebra. The set of all non-zero multiplicative linear functionals on $A$ is denoted by $M_A$. It is well-known that $M_A$ is a weak star-closed subset of the unit ball of dual $A^\ast$ of $A$, and hence $M_A$ is a compact \mbox{Hausdorff} space with respect to the relative weak star-topology. The compact \mbox{Hausdorff} space $M_A$ is called the {\it maximal ideal space} of $A$.

Let $X$ be a non-empty compact \mbox{Hausdorff} space, and let $C(X)$ be the \mbox{Banach} algebra of all complex-valued continuous functions on $X$ with the supremum norm. Every $x\in X$ determines the point evaluation $\d_x\in \M_{C(X)}$, defined by
$$
\d_x(f)=f(x)\qquad (f\in C(X)).
$$
Conversely, every non-zero multiplicative linear functional on $C(X)$ is $\d_x$ for some $x\in X$. Moreover, the canonical map $X\ni x\mapsto\d_x\in M_{C(X)}$ is a homeomorphism. 

A {\it topological vector space} is a complex vector space $E$ endowed with a \mbox{Hausdorff} topology with respect to which addition $E\times E\ni (u,v)\mapsto u+v\in E$ and multiplication by scalar $\C\times E\ni (\lam,u)\mapsto \lam u\in E$ are continuous. We denote by $0_E$ the origin of $E$. A {\it local basis} of $E$ is a basis of neighborhood of $0_E$. A subset $B\subset E$ is said to be {\it balanced} if 
$$
\lam B\subset B
$$
for all $\lam\in\C$ with $|\lam|\le 1$. 

A {\it locally convex space} is a topological vector space which has a local basis consisting of convex neighborhoods. The following lemma is well-known, so we omit its proof. 

\begin{lem}[{\cite[Theorem 1.14]{Rud}}]\label{LemBN}
Every locally convex space has a local basis which consists of balanced convex neighborhoods.
\end{lem}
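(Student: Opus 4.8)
By the very definition of a locally convex space, $E$ admits a local basis whose members are convex neighborhoods of $0_E$. The plan is to establish a single reduction step: every convex neighborhood $U$ of $0_E$ contains a balanced convex neighborhood of $0_E$. Granting this, the collection of all balanced convex neighborhoods of $0_E$ that are contained in some member of the given basis is again a local basis, and the lemma follows.

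So fix a convex neighborhood $U$ of $0_E$. The first step is to find a balanced neighborhood inside $U$. Since scalar multiplication $\C\times E\to E$ is continuous and sends $(0,0_E)$ to $0_E$, there are $\d>0$ and a neighborhood $V$ of $0_E$ with $\lam V\subset U$ whenever $|\lam|\le\d$. Put $W=\bigcup_{0<|\lam|\le\d}\lam V$. Then $0_E\in W\subset U$; moreover $W$ is a neighborhood of $0_E$, as it contains $\d V$, the image of $V$ under the homeomorphism $v\mapsto\d v$; and $W$ is balanced, because for $|\mu|\le1$ and $w=\lam v\in W$ one has $\mu w=(\mu\lam)v$ with $|\mu\lam|\le\d$, so $\mu w\in W$ when $\mu\neq0$, while $0_E=0\cdot w\in W$.

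The second step is to make $W$ convex without losing the other two properties, and for that I would pass to its convex hull $A=\mathrm{co}(W)$. Since $W\subset U$ and $U$ is convex, $A\subset U$; since $W\subset A$, the set $A$ is a neighborhood of $0_E$; and $A$ is balanced, because the convex hull of a balanced set is balanced: scaling a convex combination $\sum_i t_iw_i$ of points of $W$ by $|\mu|\le1$ gives the convex combination $\sum_i t_i(\mu w_i)$, again of points of $W$. Thus $A$ is a balanced convex neighborhood of $0_E$ contained in $U$. If an open such set is wanted, its interior $\Omega=\mathrm{int}(A)$ serves: it is open, convex (the interior of a convex set), a neighborhood of $0_E$ (as $A$ contains an open neighborhood of $0_E$), and balanced (for $0<|\mu|\le1$, multiplication by $\mu$ is a homeomorphism carrying $A$, hence $\Omega$, into itself, and $0_E\in\Omega$).

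The only genuinely non-automatic point is this last step: one must verify that forming the convex hull simultaneously keeps the set inside $U$ and keeps it balanced. The first of these is exactly where convexity of $U$, i.e.\ local convexity of $E$, is used; the second is the short computation with convex combinations above. Everything else is a routine consequence of the continuity of the vector space operations.
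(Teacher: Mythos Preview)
Your argument is correct and is essentially the standard proof (indeed, it is the argument behind \cite[Theorem~1.14]{Rud}, which the paper cites). Note, however, that the paper itself \emph{omits} the proof entirely, simply writing ``The following lemma is well-known, so we omit its proof''; so there is no in-paper argument to compare against beyond the Rudin reference, and your write-up supplies exactly what that reference contains.
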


In order to define a topology on the space of continuous functions with values in a locally convex space, we will use the following lemma in the next section. We omit its proof since it is routine.

\begin{lem}\label{LemVT}
Let $L$ be a complex vector space, and let $\B$ be a family of non-empty subsets of $L$ which satisfies the following properties.

\noindent{\rm (i)} For each pair $B_1,B_2\in\B$ there exists $B_3\in\B$ such that $B_3\subset B_1\cap B_2$.

\noindent{\rm (ii)} For each $B_1\in\B$ there exists $B_2\in\B$ such that $B_2+B_2\subset B_1$.

\noindent{\rm (iii)} Each $B\in\B$ is balanced.

\noindent{\rm (iv)} For each $u\in L$ and $B\in\B$ there exists $t>0$ such that $u\in tB$.

\noindent{\rm (v)} $\ds{\bigcap_{B\in\B}B=\{0_{L}\}}$.

\noindent{}Then there is a unique topology on $L$ for which $L$ is a topological vector space and $\B$ is a local basis of $L$.
\end{lem}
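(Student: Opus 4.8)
The plan is to build the topology directly from a prescribed neighborhood filter at each point and then check the topological-vector-space axioms one at a time; the whole argument is a routine but somewhat tedious assembly of properties (i)--(v). First I would observe that $0_L\in B$ for every $B\in\B$: since $B$ is non-empty and balanced, $0_L=0\cdot b\in 0\cdot B\subset B$ for any $b\in B$. For each $x\in L$ put
$$
\mathcal{N}_x=\{N\subset L:\ex B\in\B\sss\st\sss x+B\subset N\}.
$$
I would verify that $(\mathcal{N}_x)_{x\in L}$ satisfies the neighborhood axioms characterizing a topology: every $N\in\mathcal{N}_x$ contains $x$ (because $0_L\in B$); $\mathcal{N}_x$ is trivially closed under supersets; it is closed under finite intersections by property (i); and for each $N\in\mathcal{N}_x$, say $x+B\subset N$, property (ii) provides $B'\in\B$ with $B'+B'\subset B$, and then $M:=x+B'\in\mathcal{N}_x$ has the property that $N\in\mathcal{N}_y$ for every $y\in M$, since $y\in x+B'$ gives $y+B'\subset x+B'+B'\subset x+B\subset N$. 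This produces a unique topology on $L$ whose neighborhood filter at $x$ is exactly $\mathcal{N}_x$; in particular $\B$ is a local basis at $0_L$ and every translation is a homeomorphism.

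Next I would check continuity of the vector operations. For addition at a point $(u,v)$, given a basic neighborhood $u+v+B$ of $u+v$, property (ii) gives $B'\in\B$ with $B'+B'\subset B$, whence $(u+B')+(v+B')\subset u+v+B$. Continuity of scalar multiplication at $(\lam_0,u_0)$ is the one genuinely fiddly step: given $B\in\B$, I would first split $B$ using property (ii) and then iterate (ii) to produce a sufficiently small $B_1\in\B$, write $\lam u-\lam_0u_0=\lam(u-u_0)+(\lam-\lam_0)u_0$, and control the first summand by using that members of $\B$ are balanced (property (iii)), so that the factor $\lam$, bounded whenever $\lam$ is near $\lam_0$, can be absorbed, and the second summand by the absorbing property (iv) applied to the fixed vector $u_0$. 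Restricting $|\lam-\lam_0|$ appropriately then forces $\lam u\in\lam_0u_0+B$. I expect this estimate to be the main place where care is needed; the rest is bookkeeping.

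Finally, the Hausdorff property comes from (v): if $u\ne v$ then $u-v\ne 0_L$, so some $B\in\B$ omits $u-v$; choosing $B'\in\B$ with $B'+B'\subset B$ and using that $B'$ is balanced (hence $-B'=B'$), the neighborhoods $u+B'$ and $v+B'$ are disjoint, because $u+b_1=v+b_2$ would give $u-v=b_2-b_1\in B'+B'\subset B$. For uniqueness, note that in any topology making $L$ a topological vector space with $\B$ a local basis at $0_L$ each translation is a homeomorphism, so $\{x+B:B\in\B\}$ is a neighborhood basis at every $x$; consequently a set $U$ is open precisely when for each $u\in U$ there is $B\in\B$ with $u+B\subset U$, and this description — which makes no reference to the topology itself — shows that the topology is uniquely determined and coincides with the one constructed above.
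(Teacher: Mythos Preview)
Your argument is correct and is precisely the standard construction one would expect here. The paper itself does not give a proof of this lemma at all---it explicitly states ``We omit its proof since it is routine''---so there is nothing to compare against; your write-up simply supplies the routine details the author chose to skip.
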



\section{The space $C(X,E)$}

Throughout this section, let $X$ be a non-empty compact \mbox{Hausdorff} space, let $E$ be a locally convex space which is not $\{0_E\}$, and let $\B_E$ be a fixed local basis of $E$ which consists of balanced convex neighborhoods.

The set of all continuous functions on $X$ with values in $E$ is denoted by $C(X,E)$. For $F\in C(X,E)$, the {\it range} of $F$ is denoted by $\Ran(F)$:
$$
\Ran(F)=\{F(x)\in E:x\in X\}.
$$
With pointwise vector operations, $C(X,E)$ is a complex vector space. For each $B\in\B_E$, let
$$
V_X(B)=\{F\in C(X,E):\Ran(F)\subset B\}.
$$
Note that $V_X(B)$ is convex for all $B\in\B_E$. Indeed, if $F,G\in V_X(B)$, and if $0<\lam<1$, then 
$$
\Ran((1-\lam)F+\lam G)\subset B
$$
by the convexity of $B$, and hence $(1-\lam)F-\lam G\in V_X(B)$. Now the family $\{V_X(B):B\in\B_E\}$ satisfies the properties (i) through (v) in Lemma \ref{LemVT}. Hence there exists a unique topology on $C(X,E)$ such that $C(X,E)$ is a locally convex space and the family $\{V_X(B):B\in\B_E\}$ is a local basis of $C(X,E)$. This topology is called the {\it topology of uniform convergence}. 

For $f\in C(X)$ and $u\in E$, define $f\otimes u:X\to E$ by
$$
(f\otimes u)(x)=f(x)u
$$
for each $x\in X$. Then $f\otimes u\in C(X,E)$. The subspace of $C(X,E)$ spanned by the functions of the form $f\otimes u$ will be denoted by $C(X)\otimes E$, and is called the {\it algebraic tensor product} of $C(X)$ and $E$. Note that for each $u\in E$, the function $1\otimes u$ is the constant function on $X$ whose value is only $u$ at each point of $X$. Note also that for each $f\in C(X)$ and $u\in E$, we have $f\otimes 0_E=0\otimes u=1\otimes 0_E$.

\begin{lem}\label{LemDS}
The algebraic tensor product $C(X)\otimes E$ is dense in $C(X,E)$ with respect to the topology of uniform convergence.
\end{lem}

\begin{proof} To see this, fix $F\in C(X,E)$ and $B\in\B_E$. For each $x_0\in X$, set
$$
W_{x_0}=\{x\in X:F(x)-F(x_0)\in B\}.
$$
The continuity of $F$ implies that $W_{x_0}$ is a neighborhood of $x_0$. Since $X$ is a compact space, we can choose finitely many points $x_1,\dots, x_n\in X$ such that 
$$
X=\bigcup_{j=1}^n W_{x_j}.
$$
Let $\{h_1,\dots, h_n\}$ be a partition of unity subordinate to the finite cover $\{W_{x_1},\dots, W_{x_n}\}$, that is, $h_1,\dots, h_n$ are continuous real-valued functions on $X$ such that $h_j\ge0$, $\supp(h_j)\subset W_{x_j}$, where $\supp(h_j)$ is the closure of $\{x\in X:h_j(x)\ne0\}$, and $h_1+\dots+h_n=1$. Let 
$$
G=\sum_{j=1}^n h_j\otimes F(x_j)\in C(X)\otimes E.
$$
Note that if $x\in X$ and $h_j(x)\ne 0$, then $F(x)-F(x_j)\in B$, since $\supp(h_j)\subset W_{x_j}$. For each $x\in X$, the convexity of $B$ implies that 
\begin{align*}
F(x)-G(x)&=\sum_{j=1}^nh_j(x)F(x)-\sum_{j=1}^nh_j(x)F(x_j)\\
&=\sum_{j=1}^nh_j(x)(F(x)-F(x_j))\in B.
\end{align*}
Hence we have $F-G\in V_X(B)$. This proves that $C(X)\otimes E$ is dense in $C(X,E)$ with respect to the topology of uniform convergence.
\end{proof}


\section{Proof of the main theorem}

In this section, we will prove main results. Throughout this section, let $X$ and $Y$ be non-empty compact Hausdorff spaces, let $E$ be a locally convex space which is not $\{0_E\}$, and let $\B_E$ be a fixed local basis of $E$ which consists of balanced convex neighborhoods. 

Assume that $T:C(X,E)\to C(Y,E)$ is a map which satisfies 
$$
\Ran(TF-TG)\subset\Ran(F-G)
$$
for every $F,G\in C(X,E)$. Without loss of generality, we can assume that $T(1\otimes 0_E)=1\otimes 0_E$. Then we have
\begin{align}\label{RP0}
\Ran(TF)\subset\Ran(F)
\end{align}
for all $F\in C(X,E)$. 

Fix $u\in E\s\{0_E\}$. For each $f \in C(X)$ and $y \in Y$,
$$
T(f\otimes u)(y)\in\Ran(T(f\otimes u))\subset \Ran(f\otimes u)=(\Ran(f))u.
$$
Then there is a unique complex number $(\tilde{T}_uf)(y)\in\Ran(f)$ such that
\begin{align}\label{k}
T(f\otimes u)(y)=(\tilde{T}_uf)(y)u.
\end{align}
The complex number $\tilde{T}_uf(y)$ determines a function $\tilde{T}_uf$ on $Y$ with \eqref{k} for each $f \in C(X)$ and $u \in E \s\{0_E\}$.

\begin{lem}\label{C}
The function $\tilde{T}_uf$ is continuous on $Y$ for every $f\in C(X)$ and $u\in E\s\{0_E\}$.
\end{lem}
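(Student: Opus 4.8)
The plan is to recover the scalar $(\tilde T_uf)(y)$ from the vector $T(f\otimes u)(y)$ by means of a \emph{single} continuous linear functional on $E$, and then to read off continuity of $\tilde T_uf$ directly from \eqref{k}.

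Fix $f\in C(X)$ and $u\in E\s\{0_E\}$. Since $E$ is a Hausdorff locally convex space and $u\ne 0_E$, the Hahn--Banach separation theorem, applied to the compact set $\{u\}$ and the closed set $\{0_E\}$ (see \cite[Theorem 3.4]{Rud}), provides a continuous linear functional $\Lam$ on $E$ with $\Lam(u)\ne 0$, and after dividing by $\Lam(u)$ we may assume $\Lam(u)=1$. Applying $\Lam$ to both sides of \eqref{k} gives, for every $y\in Y$,
$$
\Lam\bigl(T(f\otimes u)(y)\bigr)=(\tilde T_uf)(y)\,\Lam(u)=(\tilde T_uf)(y).
$$
Because $T(f\otimes u)\in C(Y,E)$, the map $y\mapsto T(f\otimes u)(y)$ is continuous from $Y$ into $E$; composing it with the continuous functional $\Lam$ shows that $\tilde T_uf$ is continuous on $Y$, as desired.

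I do not expect a genuine obstacle here: the only point that needs care is that $E$ is merely locally convex rather than normed, so one must invoke Hahn--Banach in its locally convex form instead of simply estimating a norm. (Alternatively, one could argue that the one-dimensional subspace $\C u$ of $E$ carries the Euclidean topology, so that $\lam\mapsto\lam u$ is a homeomorphism of $\C$ onto $\C u$, and $\tilde T_uf$ is then the composition of $T(f\otimes u)$ with its inverse.)
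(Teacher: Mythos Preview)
Your proof is correct. The paper's own argument is essentially your parenthetical alternative: it takes a net $\{y_\a\}$ converging to $y$, notes that $\tilde T_uf(y_\a)u\to\tilde T_uf(y)u$ by continuity of $T(f\otimes u)$, and concludes $\tilde T_uf(y_\a)\to\tilde T_uf(y)$ because $u\ne0_E$ (implicitly using that $\lam\mapsto\lam u$ is a homeomorphism of $\C$ onto the one-dimensional subspace $\C u$ in any Hausdorff TVS).

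Your main route via Hahn--Banach is a genuinely different device: instead of inverting the embedding $\C\hookrightarrow E$, you project back to $\C$ by a continuous linear functional $\Lam$ with $\Lam(u)=1$. This is clean and makes the continuity a one-line composition, but it uses local convexity, whereas the paper's argument (and your alternative) needs only the Hausdorff TVS structure. In the present setting both are available, so either works; the paper's choice is the more elementary of the two.
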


\begin{proof}
Fix $y\in Y$, and choose a net $\{y_\a\}_\a$ in $Y$ which converges to $y$. Since the function $T(f\otimes u)$ is continuous, we have
$$
\tilde{T}_uf(y_\a)u=T(f\otimes u)(y_a)\to T(f\otimes u)(y)=\tilde{T}_uf(y)u.
$$
Since $u\ne 0_E$, the net $\{\tilde{T}_uf(y_\a)\}_\a$ converges to $\tilde{T}_uf(y)$. This proves the lemma.
\end{proof}

By Lemma \ref{C}, the correspondence $f\mapsto \tilde{T}_uf$ is a well-defined map from $C(X)$ into $C(Y)$ for each $u\in E\setminus\{0_E\}$, and we denote it by $\tilde{T}_u:C(X)\to C(Y)$. Since $\tilde{T}_uf\in C(Y)$, we can rewrite equality \eqref{k} as follows:
\begin{align}\label{k1}
T(f\otimes u)=(\tilde{T}_uf)\otimes u \qquad (f \in C(X), u\in E\s\{ 0_E \}).
\end{align}

\begin{lem}\label{RPP}
For each $u\in E\s\{0_E\}$, the map $\tilde{T}_u$ satisfies $\tilde{T}_u0=0$ and
$$
\Ran(\tilde{T}_uf-\tilde{T}_ug)\subset \Ran(f-g)
$$
for every $f,g\in C(X)$.
\end{lem}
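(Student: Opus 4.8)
The plan is to derive both properties of $\tilde{T}_u$ directly from the corresponding properties of $T$ via the identification \eqref{k1}. First I would verify $\tilde{T}_u 0 = 0$: since $0 \otimes u = 1 \otimes 0_E$, we have $T(0 \otimes u) = T(1 \otimes 0_E) = 1 \otimes 0_E = 0 \otimes u$, so by \eqref{k1} (or directly by the defining relation \eqref{k}) we get $(\tilde{T}_u 0)(y) u = 0_E$ for every $y \in Y$, and since $u \ne 0_E$ this forces $\tilde{T}_u 0 = 0$.

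For the range inclusion, fix $f, g \in C(X)$. By \eqref{k1} we have $T(f \otimes u) - T(g \otimes u) = (\tilde{T}_u f - \tilde{T}_u g) \otimes u$, and applying the range-preserving hypothesis to the pair $F = f \otimes u$, $G = g \otimes u$ gives
$$
\Ran\bigl((\tilde{T}_u f - \tilde{T}_u g) \otimes u\bigr) = \Ran\bigl(T(f\otimes u) - T(g\otimes u)\bigr) \subset \Ran(f \otimes u - g \otimes u) = \bigl(\Ran(f-g)\bigr) u.
$$
Here $\Ran\bigl((\tilde{T}_u f - \tilde{T}_u g) \otimes u\bigr) = \bigl(\Ran(\tilde{T}_u f - \tilde{T}_u g)\bigr) u$ since $(h \otimes u)(y) = h(y) u$ for any scalar function $h$. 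So $\bigl(\Ran(\tilde{T}_u f - \tilde{T}_u g)\bigr) u \subset \bigl(\Ran(f - g)\bigr) u$.

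The only remaining point is to cancel the scalar $u$ from both sides of an inclusion of the form $Su \subset Ru$, where $S, R \subset \C$; I would observe that since $u \ne 0_E$, the map $\C \to E$, $\lambda \mapsto \lambda u$ is injective (if $\lambda u = \mu u$ then $(\lambda - \mu) u = 0_E$, forcing $\lambda = \mu$ because $E$ is a vector space and $u \ne 0_E$). Hence $Su \subset Ru$ implies $S \subset R$, giving $\Ran(\tilde{T}_u f - \tilde{T}_u g) \subset \Ran(f - g)$ as desired. None of these steps presents a real obstacle; the argument is essentially a transfer of the hypothesis through the linear isomorphism $\C \cong \C u \subset E$, and the main thing to be careful about is the bookkeeping with $\Ran$ of functions of the form $h \otimes u$.
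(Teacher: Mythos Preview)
Your proof is correct and follows essentially the same approach as the paper's: both arguments use the normalization $T(1\otimes 0_E)=1\otimes 0_E$ together with $0\otimes u = 1\otimes 0_E$ to get $\tilde{T}_u 0 = 0$, and then transfer the range-preserving hypothesis on $T$ through the identification \eqref{k1} and cancel the nonzero vector $u$. The only cosmetic difference is that the paper argues pointwise in $y\in Y$ while you manipulate the ranges as sets; the substance is identical.
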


\begin{proof}
If $y\in Y$, then 
$$
(\tilde{T}_u0)(y)u=T(0\otimes u)(y)=T(1\otimes0_E)(y)=0_E,
$$
and hence $(\tilde{T}_u0)(y)=0$. This proves that $\tilde{T}_u0=0$. 

Let $f,g\in C(X)$, and let $y\in Y$. By \eqref{k1}, 
$$
(\tilde{T}_uf-\tilde{T}_ug)(y)u=T(f\otimes u)(y)-T(g\otimes u)(y).
$$
Hence $(\tilde{T}_uf-\tilde{T}_ug)(y)u \in \Ran(T(f\otimes u)-T(g\otimes u))
\subset \Ran(f\otimes u-g\otimes u)$.
Because $\Ran(f\otimes u-g\otimes u) = \Ran(f-g)u$, we have that
$(\tilde{T}_uf-\tilde{T}_ug)(y)u$ is in $\Ran(f-g)u$, which shows
$\Ran(\tilde{T}_uf-\tilde{T}_ug)\subset \Ran(f-g)$.
\end{proof}

By a similar argument to \cite[Theorem 3.1.]{HMT}, we can prove the following lemma. For the sake of completeness, we give its proof.

\begin{lem}\label{TT}
For each $u\in E\s\{0_E\}$, there exists a continuous map $\varphi_u:Y\to X$ such that
$$
\tilde{T}_uf=f\circ\varphi_u
$$
for all $f\in C(X)$. In particular, the map $\tilde{T}_u$ is linear.
\end{lem}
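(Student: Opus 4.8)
The plan is to reduce the statement to the Kowalski--S{\l}odkowski theorem (Lemma \ref{KS}), applied pointwise on $Y$, followed by the standard identification of $\M_{C(X)}$ with $X$. Fix $u\in E\s\{0_E\}$ and $y\in Y$, and consider the functional $\Lam_y:C(X)\to\C$ defined by $\Lam_y(f)=(\tilde{T}_uf)(y)$. By Lemma \ref{RPP} we have $\Lam_y(0)=(\tilde{T}_u0)(y)=0$, and for all $f,g\in C(X)$,
$$
\Lam_y(f)-\Lam_y(g)=(\tilde{T}_uf-\tilde{T}_ug)(y)\in\Ran(\tilde{T}_uf-\tilde{T}_ug)\subset\Ran(f-g)=\sigma(f-g),
$$
since the range of a function in $C(X)$ coincides with its spectrum. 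Thus $\Lam_y$ satisfies hypotheses (i) and (ii) of Lemma \ref{KS} with $A=C(X)$, so $\Lam_y$ is linear and multiplicative.

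Next I would check that $\Lam_y\ne 0$, so that it is an honest point evaluation. Taking $g=0$ in Lemma \ref{RPP} gives $\Ran(\tilde{T}_uf)\subset\Ran(f)$ for every $f\in C(X)$; applying this to the constant function $1$ yields $\Lam_y(1)=(\tilde{T}_u1)(y)\in\Ran(1)=\{1\}$, hence $\Lam_y(1)=1$ and in particular $\Lam_y\ne0$. Therefore $\Lam_y\in\M_{C(X)}$, and by the homeomorphism $X\ni x\mapsto\d_x\in\M_{C(X)}$ there is a unique point, which we call $\varphi_u(y)\in X$, with $\Lam_y=\d_{\varphi_u(y)}$; that is,
$$
(\tilde{T}_uf)(y)=f(\varphi_u(y))\qquad(f\in C(X)).
$$
Letting $y$ range over $Y$, this defines a map $\varphi_u:Y\to X$ satisfying $\tilde{T}_uf=f\circ\varphi_u$ for all $f\in C(X)$.

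It remains to prove that $\varphi_u$ is continuous. I would argue with nets: let $y_\a\to y$ in $Y$. Since $X$ is compact, it suffices to show that every convergent subnet $\{\varphi_u(y_\b)\}_\b$ of $\{\varphi_u(y_\a)\}_\a$ has limit $\varphi_u(y)$. If $\varphi_u(y_\b)\to x_0$, then for each $f\in C(X)$ the continuity of $f$ gives $f(\varphi_u(y_\b))\to f(x_0)$, while the continuity of $\tilde{T}_uf=f\circ\varphi_u$ on $Y$ gives $f(\varphi_u(y_\b))=(\tilde{T}_uf)(y_\b)\to(\tilde{T}_uf)(y)=f(\varphi_u(y))$; hence $f(x_0)=f(\varphi_u(y))$. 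Since $C(X)$ separates the points of the compact \mbox{Hausdorff} space $X$, this forces $x_0=\varphi_u(y)$, proving continuity. Finally, the identity $\tilde{T}_uf=f\circ\varphi_u$ makes the linearity (indeed multiplicativity) of $\tilde{T}_u$ immediate.

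I do not expect a serious obstacle here; the two points needing (minor) care are confirming that $\Lam_y$ is non-vanishing, so that it corresponds to a genuine point of $X$ rather than being the zero functional, and establishing the continuity of $\varphi_u$ through a compactness/subnet argument rather than attempting to construct it directly.
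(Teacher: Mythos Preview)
Your proof is correct and follows essentially the same approach as the paper: apply the Kowalski--S{\l}odkowski theorem pointwise to $\Lam_y=\d_y\circ\tilde{T}_u$, check $\Lam_y(1)=1$ to obtain a point $\varphi_u(y)\in X$, and then verify continuity of $\varphi_u$. The only cosmetic difference is that the paper phrases the continuity step via the weak-star convergence $\d_{\varphi_u(y_\a)}\to\d_{\varphi_u(y)}$ and the homeomorphism $X\cong M_{C(X)}$, whereas you unpack this into a compactness/subnet argument using that $C(X)$ separates points---these are equivalent.
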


\begin{proof}
Fix $y\in Y$. We shall prove that the map $\d_y\circ\tilde{T}_u$ is a non-zero multiplicative linear functional on $C(X)$. By Lemma \ref{RPP}, $\tilde{T}_u0=0$. This implies that $(\d_y\circ \tilde{T}_u)(0)=0$. Also, the map $\tilde{T}_u$ satisfies $\Ran(\tilde{T}_uf-\tilde{T}_ug)\subset \Ran(f-g)$ for every $f,g\in C(X)$, and hence
\begin{align*}
(\d_y\circ \tilde{T}_u)(f)-(\d_y\circ \tilde{T}_u)(g)&=(\tilde{T}_uf-\tilde{T}_ug)(y)\\
&\in\Ran(\tilde{T}_uf-\tilde{T}_ug)\subset\Ran(f-g),
\end{align*}
for every $f,g\in C(X)$. Lemma \ref{KS}, the \mbox{Kowalski}-\mbox{S{\l}odkowski} theorem, shows that
$\d_y\circ \tilde{T}_u$ is linear and multiplicative. Since $\tilde{T}_u0 = 0$,
$$
\Ran(\tilde{T}_u1)=\Ran(\tilde{T}_u1-\tilde{T}_u0)\subset \Ran(1)=\{1\},
$$
and thus $(\d_y\circ \tilde{T}_u)(1)=1$. This shows that the functional $\d_y\circ \tilde{T}_u$ on $C(X)$ is non-zero. Hence there is a unique point $\varphi_u(y)\in X$ such that 
$$
\d_y\circ \tilde{T}_u=\d_{\varphi_u(y)}.
$$
Now we have a map $\varphi_u:Y\to X$ such that
$$
(\tilde{T}_uf)(y)=f(\varphi_u(y))
$$
for every $f \in C(X)$.

It suffices to prove that the map $\varphi_u$ is continuous. To see this, fix $y\in Y$, and choose a net $\{y_\a\}_\a$ in $Y$ which converges to $y$. For every $f\in C(X)$, the continuity of $\tilde{T}_uf$ implies that
$$
\d_{\varphi_u(y_\a)}(f)=(\tilde{T}_uf)(y_\a)\to(\tilde{T}_uf)(y)=\d_{\varphi_u(y)}(f),
$$
and hence $\d_{\varphi_u(y_\a)}\to\d_{\varphi_u(y)}$ with respect to the relative weak star-topology. Hence we obtain $\varphi_u(y_\a)\to\varphi_u(y)$, and this proves the continuity of $\varphi_u$.
\end{proof}

\begin{lem}\label{add}
For each $f,g\in C(X)$ and $u,v\in E$,
\begin{align}\label{addeq}
T(f\otimes u+g\otimes v)=T(f\otimes u)+T(g\otimes v).
\end{align}
\end{lem}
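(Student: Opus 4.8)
The plan is to prove \eqref{addeq} pointwise on $Y$, splitting into cases according to the linear relation between $u$ and $v$. First I would dispose of the degenerate cases: if $u=0_E$, then $f\otimes u=1\otimes 0_E$, so $T(f\otimes u)=T(1\otimes 0_E)=1\otimes 0_E$ and both sides of \eqref{addeq} reduce to $T(g\otimes v)$; the case $v=0_E$ is symmetric. Thus it remains to treat $u,v\in E\s\{0_E\}$, so that the representation \eqref{k1} applies to $f\otimes u$, $g\otimes v$ and their scalar multiples.

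Next I would handle the case where $u$ and $v$ are linearly dependent, say $v=cu$ with $c\in\C\s\{0\}$. Here $g\otimes v=(cg)\otimes u$ and $f\otimes u+g\otimes v=(f+cg)\otimes u$, so \eqref{k1} together with the linearity of $\tilde{T}_u$ from Lemma \ref{TT} yields
$$
T(f\otimes u+g\otimes v)=\big(\tilde{T}_u(f+cg)\big)\otimes u=(\tilde{T}_uf)\otimes u+\big(\tilde{T}_u(cg)\big)\otimes u=T(f\otimes u)+T(g\otimes v),
$$
which is exactly \eqref{addeq}.

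Finally, for $u$ and $v$ linearly independent, set $F=f\otimes u+g\otimes v$ and fix $y\in Y$. Since $F-g\otimes v=f\otimes u$, the range hypothesis on $T$ gives $TF(y)-T(g\otimes v)(y)\in\Ran(f\otimes u)=(\Ran f)u$, so by \eqref{k1} there is $s\in\Ran f$ with $TF(y)=(\tilde{T}_vg)(y)\,v+s\,u$; similarly, using $F-f\otimes u=g\otimes v$, there is $t\in\Ran g$ with $TF(y)=(\tilde{T}_uf)(y)\,u+t\,v$. Comparing these two expressions for $TF(y)$ and using the linear independence of $u$ and $v$ forces $s=(\tilde{T}_uf)(y)$ and $t=(\tilde{T}_vg)(y)$, whence $TF(y)=(\tilde{T}_uf)(y)u+(\tilde{T}_vg)(y)v=T(f\otimes u)(y)+T(g\otimes v)(y)$; since $y$ is arbitrary, \eqref{addeq} follows. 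I expect the only genuine obstacle to be the linearly dependent case: there the coordinate-comparison argument of the independent case collapses, and one is forced to fall back on the linearity of $\tilde{T}_u$ already established in Lemma \ref{TT}.
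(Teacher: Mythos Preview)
Your proof is correct and follows essentially the same strategy as the paper's: dispose of the degenerate cases, use the linearity of $\tilde{T}_u$ (Lemma \ref{TT}) when $u,v$ are linearly dependent, and compare coordinates via the range hypothesis when they are independent. Your independent case is in fact slightly more economical than the paper's---you apply the range hypothesis only to the two differences $F-f\otimes u$ and $F-g\otimes v$, whereas the paper also invokes \eqref{RP0} to produce an auxiliary point $x_0$ before comparing.
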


\begin{proof}
Since $T(1\otimes 0_E)=1\otimes 0_E$, it suffices to prove that equality \eqref{addeq} holds when both $u$ and $v$ are not $0_E$. Let $u,v\in E\s\{0_E\}$, and let $f,g\in C(X)$.

First, suppose that $u$ and $v$ are linearly dependent, that is, $v=\a u$ for some non-zero complex number $\a$. Using the linearity of $\tilde{T}_u$, we have
\begin{align*}
T(f\otimes u+g\otimes v)&=T((f+\a g)\otimes u)=\tilde{T}_u(f+\a g)\otimes u \\
&=(\tilde{T}_uf+\tilde{T}_u(\a g))\otimes u =\tilde{T}_uf\otimes u+\tilde{T}_u(\a g)\otimes u \\
&=T(f\otimes u)+T(g\otimes v).
\end{align*}
Hence, we obtain equality \eqref{addeq} if $u$ and $v$ are linearly dependent.

Second, suppose that $u$ and $v$ are linearly independent. Fix $y\in Y$. By equality \eqref{RP0}, we have
\begin{align*}
T(f\otimes u+g\otimes v)(y)&\in \Ran(T(f\otimes u+g\otimes v))\\
&\subset\Ran(f\otimes u+g\otimes v).
\end{align*}
Hence there exists a point $x_0\in X$ such that 
\begin{align}\label{a1}
T(f\otimes u+g\otimes v)(y)=f(x_0)u+g(x_0)v.
\end{align}
On the other hand, $T(f\otimes u+g\otimes v)(y)-T(f\otimes u)(y)$ is in $\Ran(T(f\otimes u+g\otimes v)-T(f\otimes u)) \subset\Ran(g\otimes v)$, and then there exists a point $x_1\in X$ such that 
\begin{align}\label{a2}
T(f\otimes u+g\otimes v)(y)-T(f\otimes u)(y)=g(x_1)v.
\end{align}
By a similar argument, there exists a point $x_2\in X$ such that 
\begin{align}\label{a3}
T(f\otimes u+g\otimes v)(y)-T(g\otimes v)(y)=f(x_2)u.
\end{align}
It follows from \eqref{a1} and \eqref{a2} that
$$
f(x_0)u-T(f\otimes u)(y)=g(x_1)v-g(x_0)v.
$$
Since $T(f\otimes u)(y)=
\tilde{T}_uf(y)u$,
the linear independence of $u$ and $v$ now implies 
\begin{align}\label{a4}
T(f\otimes u)(y)=f(x_0)u.
\end{align}
From \eqref{a1} and \eqref{a3}, we conclude similarly that
\begin{align}\label{a5}
T(g\otimes v)(y)=g(x_0)v.
\end{align}
Equalities \eqref{a1}, \eqref{a4}, and \eqref{a5} yield \eqref{addeq}.
\end{proof}

\begin{lem}\label{TuTv}
For every $u,v\in E\s\{0_E\}$, $\tilde{T}_u=\tilde{T}_v$.
\end{lem}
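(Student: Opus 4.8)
The plan is to show $(\tilde T_uf)(y)=(\tilde T_vf)(y)$ for every $f\in C(X)$ and $y\in Y$, splitting according to whether $u$ and $v$ are linearly dependent or independent. The key input will be Lemma~\ref{add}, together with the representation \eqref{k1} and the linearity of each $\tilde T_w$ from Lemma~\ref{TT}.

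First I would dispose of the linearly dependent case. Write $v=\a u$ for some $\a\in\C\s\{0\}$. Using the elementary identity $f\otimes(\a u)=(\a f)\otimes u$ in $C(X)\otimes E$, I would compute $T(f\otimes v)$ in two ways: by \eqref{k1} it equals $(\tilde T_vf)\otimes v=\a\,(\tilde T_vf)\otimes u$, and also $T(f\otimes v)=T((\a f)\otimes u)=(\tilde T_u(\a f))\otimes u=\a\,(\tilde T_uf)\otimes u$ by the linearity of $\tilde T_u$. Since $u\ne 0_E$, this forces $\tilde T_uf=\tilde T_vf$ for all $f\in C(X)$.

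Next, for $u$ and $v$ linearly independent, note that $u+v\ne 0_E$, so $\tilde T_{u+v}$ is defined. Since $f\otimes u+f\otimes v=f\otimes(u+v)$, Lemma~\ref{add} (with $g$ replaced by $f$) gives $T(f\otimes(u+v))=T(f\otimes u)+T(f\otimes v)$. Rewriting all three terms by \eqref{k1} and evaluating at an arbitrary $y\in Y$ yields
$$
(\tilde T_{u+v}f)(y)\,(u+v)=(\tilde T_uf)(y)\,u+(\tilde T_vf)(y)\,v .
$$
By the linear independence of $u$ and $v$, comparing the coefficients of $u$ and of $v$ gives $(\tilde T_uf)(y)=(\tilde T_{u+v}f)(y)=(\tilde T_vf)(y)$. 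As $f$ and $y$ are arbitrary, $\tilde T_u=\tilde T_v$, completing the proof.

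I do not expect a serious obstacle here: everything reduces to the additivity already established in Lemma~\ref{add} plus bookkeeping with the scalars extracted via \eqref{k1}. The only point requiring care is that $u+v$ can be $0_E$, which is precisely why the linearly dependent case must be handled separately by the scalar manipulation rather than by the $u+v$ trick.
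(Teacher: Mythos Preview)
Your proof is correct and follows essentially the same approach as the paper's: both split into the linearly dependent and linearly independent cases, handling the former by the scalar identity $f\otimes(\a u)=(\a f)\otimes u$ together with the linearity of $\tilde T_u$, and the latter by applying Lemma~\ref{add} to $f\otimes u+f\otimes v=f\otimes(u+v)$ and comparing coefficients via the linear independence of $u$ and $v$. Your closing remark explaining why the dependent case cannot be absorbed into the $u+v$ trick is a nice addition not made explicit in the paper.
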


\begin{proof}
Let $u,v\in E\s\{0_E\}$. First, suppose that $u$ and $v$ are linearly dependent, that is, $v=\a u$ for some non-zero complex number $\a$. If $f\in C(X)$, then
\begin{align*}
(\tilde{T}_uf)\otimes v&=(\tilde{T}_uf)\otimes (\a u)=\a(\tilde{T}_uf)\otimes u\\
&=(\tilde{T}_u(\a f))\otimes u=T((\a f)\otimes u)\\
&=T(f\otimes v)=(\tilde{T}_vf)\otimes v,
\end{align*}
and hence $\tilde{T}_uf=\tilde{T}_vf$. This proves that $\tilde{T}_u=\tilde{T}_v$ if $u$ and $v$ are linearly dependent.

Second, suppose that $u$ and $v$ are linearly independent. Let $f\in C(X)$. From Lemma \ref{add}, we have 
\begin{align*}
(\tilde{T}_uf)\otimes u+(\tilde{T}_vf)\otimes v&=T(f\otimes u)+T(g\otimes v)=T(f\otimes u+f\otimes v)\\
&=(\tilde{T}_{u+v}f)\otimes (u+v)=(\tilde{T}_{u+v}f)\otimes u+(\tilde{T}_{u+v}f)\otimes v,
\end{align*}
and hence the linear independence of $u$ and $v$ implies that $\tilde{T}_uf=\tilde{T}_{u+v}f=\tilde{T}_vf$. This proves that $\tilde{T}_u=\tilde{T}_v$.
\end{proof}

\begin{lem}\label{Conti}
The map 
$T:C(X,E) \to C(Y,E)$ is continuous with the topology of uniform convergence.
\end{lem}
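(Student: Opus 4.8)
The plan is to prove the stronger statement that $T$ is uniformly continuous; this will follow almost at once from the defining property of $T$ together with the description of the topology of uniform convergence given in Section 3. The point is that the inclusion $\Ran(TF-TG)\subset\Ran(F-G)$ says precisely that $T$ never enlarges the basic neighbourhoods $V_X(B)$, $V_Y(B)$.

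To make this precise, recall first that, just as $\{V_X(B):B\in\B_E\}$ is a local basis of $C(X,E)$, the family $\{V_Y(B):B\in\B_E\}$ is a local basis of $C(Y,E)$; since $C(Y,E)$ is a topological vector space, for any $F_0\in C(X,E)$ the sets $TF_0+V_Y(B)$ with $B\in\B_E$ form a neighbourhood basis at $TF_0$. Now fix $F_0\in C(X,E)$ and $B\in\B_E$. I would take the neighbourhood $F_0+V_X(B)$ of $F_0$ and check that $T\bigl(F_0+V_X(B)\bigr)\subset TF_0+V_Y(B)$: if $F-F_0\in V_X(B)$, then $\Ran(F-F_0)\subset B$, whence
$$
\Ran(TF-TF_0)\subset\Ran(F-F_0)\subset B,
$$
that is, $TF-TF_0\in V_Y(B)$. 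Since $F_0$ and $B$ are arbitrary, $T$ is continuous, indeed uniformly continuous, on $C(X,E)$.

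I do not anticipate any real obstacle here. The only steps deserving a word of justification are the standard fact that $\{TF_0+V_Y(B):B\in\B_E\}$ is a neighbourhood basis at $TF_0$ in the topological vector space $C(Y,E)$, and the observation that the defining inclusion is applied to the arbitrary pair $F,F_0\in C(X,E)$, not merely to pairs drawn from $C(X)\otimes E$. Equivalently, one may phrase the argument with nets: if $F_\a\to F$ in $C(X,E)$, then for each $B\in\B_E$ one has $\Ran(F_\a-F)\subset B$ eventually, hence $\Ran(TF_\a-TF)\subset B$ eventually, i.e.\ $TF_\a\to TF$. Once $T$ is known to be continuous, it will combine with the density of $C(X)\otimes E$ in $C(X,E)$ (Lemma \ref{LemDS}), the continuity of the composition map $F\mapsto F\circ\varphi_u$ on $C(X,E)$, and the identity $T(f\otimes u)=(f\circ\varphi_u)\otimes u$ obtained above from \eqref{k1} and Lemmas \ref{TT} and \ref{add}, to yield the desired formula for $TF$ on all of $C(X,E)$.
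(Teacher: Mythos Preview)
Your proof is correct and follows essentially the same approach as the paper: both exploit the defining inclusion $\Ran(TF-TG)\subset\Ran(F-G)$ to show that $F-F_0\in V_X(B)$ forces $TF-TF_0\in V_Y(B)$, the paper phrasing this with nets while you phrase it with neighbourhoods (and indeed also give the net version). Your added observation that $T$ is in fact uniformly continuous is a small bonus not stated in the paper.
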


\begin{proof}
We shall prove the continuity of $T$ at each $F\in C(X,E)$. Fix $B\in \B_E$. Let $\{F_\a\}_{\a\in\Lam}$ be a net which converges to $F$, where $\Lam=(\Lam,\preceq)$ is a directed set. Then there exists $\a_0\in\Lam$ such that $F_\a-F\in V_X(B)$ for all $\a\succeq \a_0$. By the definition of the set $V_X(B)$,
$$
\Ran(TF_\a-TF)\subset\Ran(F_\a-F)\subset B
$$
for all $\a\succeq\a_0$, and hence $TF_\a-TF\in V_Y(B)$. This proves that the net $\{TF_\a\}_{\a\in\Lam}$ converges to $TF$.
\end{proof}

\begin{proof}[Proof of Theorem \ref{MT}]
By Lemma \ref{TuTv}, $\varphi_u=\varphi_v$ for all $u,v\in E\s\{0_E\}$. Hence we may write $\varphi$ instead of $\varphi_u$. Equality \eqref{k1} with Lemma \ref{TT} shows that
\begin{align}\label{k25}
T(f\otimes u)=(f\otimes u)\circ\varphi
\end{align}
for all $f\in C(X)$ and $u\in E\s\{0_E\}$. From Lemma \ref{add} and equality \eqref{k25}, we see that $T$ is a composition operator with symbol $\varphi$ on the algebraic tensor product $C(X)\otimes E$:
\begin{align}\label{k3}
T\paren{\sum_{j=1}^nf_j\otimes u_j}=\paren{\sum_{j=1}^nf_j\otimes u_j}\circ\varphi
\end{align}
for all $f_1,\dots,f_n\in C(X)$ and $u_1,\dots, u_n\in E\s\{0_E\}$. 

By equality \eqref{k3}, Lemma \ref{LemDS}, and Lemma \ref{Conti}, $T$ is a continuous map from $C(X,E)$ into the Hausdorff space $C(Y,E)$ which coincides with the composition operator with symbol $\varphi$ on the dense subspace $C(X)\otimes E$. Hence $T$ must coincide with composition operator with symbol $\varphi$ on the whole space $C(X,E)$:
$$
TF=F\circ\varphi
$$
for all $F\in C(X,E)$, as desired.
\end{proof}

\begin{proof}[Proof of Corollary \ref{MTC}]
\noindent (a)$\imp$(b) Suppose that $\varphi$ is not surjective. Then we can choose a point $x_0\in X\s\varphi(Y)$. The continuity of $\varphi$ implies that $\varphi(Y)$ is compact in $X$. By \mbox{Urysohn}'s lemma, there exists a function $f\in C(X)$ such that $f(x_0)=1$ and $f(x)=0$ for all $x\in\varphi(Y)$. Let $u\in E\s\{0_E\}$, and define $F=f\otimes u$. Then $F\ne1\otimes 0_E$ while $TF=(f \circ \varphi)\otimes u=0_E$. Hence $T$ cannot be injective. \\

\noindent (b)$\imp$(c) Suppose that $\varphi$ is surjective. Let $F,G\in C(X,E)$. It suffices to show that $\Ran(F-G)\subset \Ran(TF-TG)$. If $x\in X$, then there is a point $y\in Y$ such that $x=\varphi(y)$, and hence
\begin{align*}
(F-G)(x)&=F(\varphi(y))-G(\varphi(y))\\
&=(TF-TG)(y)\in\Ran(TF-TG).
\end{align*}
This proves that $\Ran(F-G)\subset \Ran(TF-TG)$.
\\

\noindent (c)$\imp$(a) Suppose that the statement (c) holds. If $TF=TG$, then 
$$
\Ran(F-G)=\Ran(TF-TG)=\{0_{E}\},
$$
and hence $F=G$. 
\\

\noindent (d)$\imp$(e) Suppose that $\varphi$ is not injective. Then there are distinct points $y_1$ and $y_2$ in $Y$ such that $\varphi(y_1)=\varphi(y_2)$. It follows that $TF(y_1)=TF(y_2)$ for all $F\in C(X,E)$. On the other hand, there exits $H\in C(Y,E)$ such that $H(y_1)\ne H(y_2)$. For instance, we can choose $H$ to be $h\otimes u$ for $h\in C(Y)$ which satisfies $h(y_1)\ne h(y_2)$ and for $u\in E\s\{0_E\}$. Hence $T$ cannot be surjective.
\\

\noindent (e)$\imp$(d) Note that if the map $\tilde{T}:C(X)\to C(Y)$ defined by 
$$
\tilde{T}f=f\circ\varphi\,\,\, (f\in C(X))
$$
is surjective, then $T$ is also surjective. Indeed, if $\tilde{T}$ is surjective, then $T$ carries $C(X)\otimes E$ onto $C(Y)\otimes E$, and hence $T$ must be surjective.

Suppose that $\varphi$ is injective. It suffices to show that $\tilde{T}$ is surjective. Let $h\in C(Y)$. Since $\varphi$ is a homeomorphism from $Y$ onto its image $\varphi(Y)$, there exists a continuous map $\psi:\varphi(Y)\to Y$ such that the composition $\psi\circ\varphi$ is identity on $Y$. Now $h\circ\psi$ is a function defined on the closed subset $\varphi(Y)$ of compact \mbox{Hausdorff} space $X$. By the \mbox{Tietze} extension theorem, there exists a function $f\in C(X)$ such that $f=h\circ\psi$ on $\varphi(Y)$. Hence $\tilde{T}f=(h\circ \psi)\circ \varphi=h$. This proves that $\tilde{T}$ is surjective.
\end{proof}


\begin{thebibliography}{9}

\bibitem{GS} R.S. Ghodrat and F. Sady, {\it Point multipliers and the Gleason-Kahane-\.{Z}elazko theorem}, Banach J. Math. Anal. {\bf 11} (2017), no. 4, 864-879.

\bibitem{Gle} A.M. Gleason, {\it A characterization of maximal ideals}, J. Anal. Math. {\bf 19} (1967), 171-172.

\bibitem{HMT} O. Hatori, T. Miura and H. Takagi, {\it Unital and multiplicatively spectrum-preserving surjections between semi-simple commutative Banach algebras are linear and multiplicative}, J. Math. Anal. Appl. {\bf 326} (2007), 281-296.

\bibitem{KZ} J.P. Kahane and W. \.{Z}elazko, {\it A characterization of maximal ideals in commutative Banach algebras}, Studia Math. {\bf 29} (1968), 339-343.

\bibitem{KS} S. Kowalski and Z. S{\l}odkowski, {\it A characterization of multiplicative linear functionals in Banach spaces}, Studia Math. {\bf 67} (1980), no. 3, 215-233.

\bibitem{Rud} W. Rudin, Functional Analysis, Second edition, McGraw-Hill, Inc., New York, 1991.

\end{thebibliography}
\end{document}